\numberwithin{equation}{section}
\newcommand{\R}{{\mathbb R}}
\DeclareMathOperator{\stab}{stab}
\newtheorem*{theorem*}{Theorem}
\newtheorem*{corollary*}{Corollary}
\newtheorem{theorem}{Theorem}
\newtheorem{lemma}[theorem]{Lemma}
\newtheorem{conjecture}[theorem]{Conjecture}
\theoremstyle{definition}
\begin{document}
\title{Face-subgroups of permutation polytopes}
\date{February 27, 2015}

\author{Christian Haase}
\email{haase@math.fu-berlin.de}
\address{Mathematisches Institut, Freie Universit\"at Berlin,
  14195 Berlin, Germany}



\vspace*{-20mm}
\maketitle
\vspace*{-6mm}
\thispagestyle{empty}
\section{The Conjecture}
A permutation polytope $P(G)$ is the convex hull, in $\R^{n \times
  n}$, of a subgroup $G$ of the group $S_n$ of $n \times n$
permutation  matrices. This is a convex geometric invariant of a
permutation representation, and it yields various numerical invariants
like dimension, volume, diameter, $f$-vector, etc.
Permutation polytopes and their faces were studied in \cite{BHNP07}.

Let $[n] := \{1, \ldots, n\} = \bigsqcup I_k$ be a partition of the
ground set. We define the stabilizer of this partition as
\[
\stab(G;(I_k)_k) := \{ \sigma \in G : \sigma(I_k)=I_k
\text{ for all } k \} \leq G\,.
\]

\begin{lemma} \label{lemma:stab-is-face}
  $P(\stab(G;(I_k)_k))$ is a face of $P(G)$.
\end{lemma}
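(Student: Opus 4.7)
The plan is to exhibit an explicit linear functional on $\R^{n \times n}$ whose maximum over $P(G)$ is attained precisely at the permutation matrices lying in $\stab(G;(I_k)_k)$. Once such a functional is found, the conclusion is immediate because every vertex of $P(G)$ is a permutation matrix from $G$, so the face on which a linear functional attains its maximum is the convex hull of the set of maximizing vertices.

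To build the functional, I would take the matrix $C \in \R^{n \times n}$ defined by
\[
  C_{ij} \;=\; \begin{cases} 1 & \text{if $i$ and $j$ lie in the same part $I_k$,} \\ 0 & \text{otherwise.} \end{cases}
\]
For any permutation matrix $\sigma$, viewed as the bijection $i \mapsto \sigma(i)$, the pairing $\langle C,\sigma\rangle = \sum_i C_{i,\sigma(i)}$ counts the number of indices $i$ for which $i$ and $\sigma(i)$ belong to the same block of the partition.

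The key step is the observation that $\langle C,\sigma\rangle \leq n$, with equality if and only if $\sigma(i)$ and $i$ lie in the same block for every $i$, i.e., $\sigma(I_k) \subseteq I_k$ for each $k$; since $\sigma$ is a bijection and the $I_k$ partition $[n]$, these inclusions force $\sigma(I_k) = I_k$. Hence the permutation matrices of $G$ maximizing $\langle C,\cdot\rangle$ on $P(G)$ are exactly the elements of $\stab(G;(I_k)_k)$.

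Finally I would wrap up by invoking the standard fact that the vertices of $P(G)$ are precisely the elements of $G$ (these matrices are vertices of the Birkhoff polytope, hence extreme in any subpolytope they span). The face $F = \{x \in P(G) : \langle C,x\rangle = n\}$ is therefore the convex hull of $\stab(G;(I_k)_k)$, which is $P(\stab(G;(I_k)_k))$. I do not anticipate any real obstacle here: the argument is a one-line construction plus a parity-type counting observation, and the only thing worth being careful about is the ``bijection plus partition'' step that upgrades the set-inclusions $\sigma(I_k)\subseteq I_k$ to equalities.
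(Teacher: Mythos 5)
Your proof is correct: the functional $\langle C,\cdot\rangle$ with $C_{ij}=1$ exactly when $i,j$ share a block attains its maximum $n$ on $P(G)$ (e.g.\ at the identity), and the maximizing group elements are precisely the block-preserving ones, so the corresponding face is $P(\stab(G;(I_k)_k))$. This is essentially the standard argument; the paper states the lemma without proof, deferring to \cite{BHNP07}, where the same linear-functional construction appears.
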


In~\cite{BHNP07} we formulated the following conjecture.

\begin{conjecture}
  Let $G \le S_n$. Suppose $H \le G$ is a subgroup such that $P(H)
  \preceq P(G)$ is a face. Then there is a partition $[n] = \bigsqcup
  I_k$ so that $H = \stab(G;(I_k)_k)$.
  \label{conj}
\end{conjecture}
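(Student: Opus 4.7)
The plan is to take $(I_k)_k$ to be the orbit decomposition of $H$ acting on $[n]$ --- these are genuine equivalence classes since $H$ is a subgroup --- and to prove $H = H'$ for $H' := \stab(G;(I_k)_k)$ via a barycenter comparison and the face-extremality principle. The inclusion $H \le H'$ is automatic: every $\sigma \in H$ permutes its own orbits.

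For the reverse inclusion $H' \le H$ I would consider the two barycenters
\[
b_H \;:=\; \frac{1}{|H|}\sum_{\sigma \in H}\sigma,
\qquad
b_{H'} \;:=\; \frac{1}{|H'|}\sum_{\tau \in H'}\tau.
\]
The $(i,j)$-entry of $b_H$ is the proportion of $\sigma \in H$ with $\sigma(i)=j$, which by orbit--stabilizer equals $1/|I_k|$ when $i,j$ lie in a common orbit $I_k$ and vanishes otherwise. The same formula holds for $b_{H'}$: because $H \le H'$ the group $H'$ is still transitive on each $I_k$, and because $H'$ preserves the partition no index is sent across blocks. Therefore $b_H = b_{H'}$, and in particular $b_{H'} \in P(H)$.

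Now I invoke the defining extremal property of a face: if a point of a face $F \preceq P$ is expressed as a convex combination $\sum_v \lambda_v v$ with every $\lambda_v > 0$ of elements $v \in P$, then each $v$ lies in $F$. Here $b_{H'}$ is such a strictly positive combination of the matrices $\tau$, $\tau \in H'$, and these are vertices of $P(G)$ because permutation matrices are the extreme points of the Birkhoff polytope, hence extreme in any $P(G)$. Since $b_{H'} \in P(H)$ and $P(H) \preceq P(G)$, every $\tau \in H'$ must lie in $P(H)$, and then, being extreme, is a vertex of $P(H)$, i.e.\ an element of $H$. Thus $H' \subseteq H$ and the conjecture follows. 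I do not foresee a serious obstacle: the only genuine computation is the barycenter identity $b_H = b_{H'}$, which hinges on $H$ and $H'$ sharing the same orbit structure on $[n]$; the rest is the standard face-extremality principle.
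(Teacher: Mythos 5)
Your proof is correct, and its core is the same as the paper's: the same choice of partition (the $H$-orbits) and the same key computation---that the barycenter of a permutation group depends only on its orbit partition, which is exactly Lemma~\ref{lemma:barycenter}. The difference is in the endgame. The paper first invokes Lemma~\ref{lemma:stab-is-face} to know that $P(H')$ is \emph{also} a face of $P(G)$, and then uses the fact that two faces of a polytope sharing a relative interior point coincide. You instead run a one-sided extremality argument: $b_{H'}=b_H$ lies in the face $P(H)\preceq P(G)$ and is a strictly positive convex combination of the vertices $\tau\in H'$ of $P(G)$, so every such $\tau$ lies in $P(H)$ and hence, being an extreme point of the Birkhoff polytope, in $H$. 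This buys you independence from Lemma~\ref{lemma:stab-is-face}: your argument proves Conjecture~\ref{conj} using only the hypothesis that $P(H)$ is a face, and Lemma~\ref{lemma:stab-is-face} is then needed only for the converse statement that every partition stabilizer actually arises as a face-subgroup. (A trivial point of convention: you read the $(i,j)$ entry as counting $\sigma(i)=j$ while the paper counts $g(j)=i$; since the resulting formula is symmetric in $i$ and $j$, nothing is affected.)
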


As evidence for the conjecture, we observed that it holds for $G =
S_n$ as well as for cyclic $G$. Later, Jessica Nowack and Daniel
Heinrich verified the conjecture for the dihedral groups $D_n$,
compare \cite[Prop.~6.1]{dihedralPermutationPolytopes}.

Subsequent efforts to better understand the inequality description of
permutation polytopes, even of abelian groups, can lead one to believe
that there is hardly anything one could say about general permutation
polytopes~\cite{cyclicPermutationPolytopesFPSAC}.

\section{The Proof}
For $i \in [n]$ we denote the orbit under $G$ by $G . i$, and the
stabilizer of $i$ by $G_i$.
\begin{lemma} \label{lemma:barycenter}
  Let $A := \frac{1}{|G|} \sum_{g \in G} g$ be the vertex-barycenter
  of $G \subset \R^{n \times n}$.
  Then $A$ has entries
  \[
  a_{ij} = \begin{cases}
    \frac{1}{|G . i|} & \text{ if } G . i = G . j \\
    0 & \text{ else.}
  \end{cases}
  \]
  In particular, $A$ only depends on the orbit structure of $G$.
\end{lemma}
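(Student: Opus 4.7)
The plan is to compute $a_{ij}$ directly from the definition, using the orbit--stabilizer theorem.

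First I would unwind the entry: by linearity,
\[
a_{ij} \;=\; \frac{1}{|G|}\sum_{g \in G} g_{ij},
\]
where $g_{ij}$ denotes the $(i,j)$-entry of the permutation matrix associated to $g$. With the standard convention $g_{ij} = 1$ iff $g(j) = i$, this simplifies to
\[
a_{ij} \;=\; \frac{|\{g \in G : g(j) = i\}|}{|G|}.
\]

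Next I would split into two cases. If $i \notin G . j$ then $\{g \in G : g(j) = i\} = \emptyset$, so $a_{ij} = 0$, handling the off-orbit case. If $i \in G . j$, pick any $g_0 \in G$ with $g_0(j) = i$; then $\{g \in G : g(j) = i\} = g_0 G_j$ is a left coset of the point stabilizer $G_j$, and therefore has cardinality $|G_j|$. By orbit--stabilizer, $|G_j| = |G|/|G . j|$, so
\[
a_{ij} \;=\; \frac{|G|/|G . j|}{|G|} \;=\; \frac{1}{|G . i|},
\]
using that $G . i = G . j$ in this case. The ``in particular'' assertion is then immediate since the formula for $a_{ij}$ only references which pairs $(i,j)$ lie in a common orbit and the size of that orbit.

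I do not expect any real obstacle here; the only subtlety is keeping the convention for $g_{ij}$ consistent (row vs.\ column action), and the main content is just the orbit--stabilizer identification of $\{g : g(j) = i\}$ as a coset.
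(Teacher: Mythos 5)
Your proof is correct and follows essentially the same route as the paper: identify $a_{ij}$ as the proportion of $g \in G$ with $g(j)=i$, recognize this set as a coset of a point stabilizer (you use $G_j$, the paper uses $G_i$; these are conjugate hence of equal size), and apply orbit--stabilizer. No issues.
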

\begin{proof}
  The $ij$-entry of $\sum_{g \in G} g$ counts $g \in G$ with
  $g(j)=i$. This number is either zero or equal to $|G_i|$. Now use
  $|G|=|G_i|\cdot|G.i|$.
\end{proof}
\begin{theorem}
  Conjecture~\ref{conj} is true.
\end{theorem}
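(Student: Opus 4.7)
The plan is to exhibit, given $H \le G$ with $P(H) \preceq P(G)$, a natural candidate partition, namely the orbit partition of $H$ on $[n]$, and show that the corresponding stabilizer in $G$ equals $H$. The whole argument hinges on Lemma~\ref{lemma:barycenter}, which will force two a priori different faces of $P(G)$ to coincide.

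\textbf{Step 1.} Take $(I_k)_k$ to be the orbit partition of $H$ acting on $[n]$, and set $K := \stab(G;(I_k)_k)$. By construction every $h \in H$ preserves each $I_k$, so $H \le K$, and our goal becomes $K \le H$. Observe next that the orbits of $K$ on $[n]$ are exactly the $I_k$: on the one hand $K$ stabilizes each $I_k$ setwise, so every $K$-orbit is contained in some $I_k$; on the other hand $H \le K$ forces each $K$-orbit to contain a full $H$-orbit, i.e.\ a full $I_k$. Hence $H$ and $K$ share the same orbit structure on $[n]$.

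\textbf{Step 2.} Let $A_H$ and $A_K$ be the vertex-barycenters of $P(H)$ and $P(K)$ inside $\R^{n\times n}$. By Lemma~\ref{lemma:barycenter} the matrix $A_H$ depends only on the orbit structure of $H$, and similarly for $A_K$; by Step~1 these orbit structures coincide, so
\[
A_H \;=\; A_K\,.
\]

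\textbf{Step 3.} Both $P(H)$ and $P(K)$ are faces of $P(G)$: the former by assumption, the latter by Lemma~\ref{lemma:stab-is-face}. As convex combinations of \emph{all} their vertices with strictly positive weights, $A_H$ and $A_K$ lie in the relative interior of $P(H)$ and $P(K)$ respectively. Since two faces of a polytope that share a relative interior point must coincide, $P(H)=P(K)$. Finally, the vertex set of a permutation polytope $P(\Gamma)$ is exactly $\Gamma$ (distinct permutation matrices being affinely independent vertices of the Birkhoff polytope), so $P(H)=P(K)$ yields $H=K$, completing the proof.

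The argument is short because Lemma~\ref{lemma:barycenter} does all the heavy lifting; the only step that required care is Step~1, where one has to check that enlarging $H$ to the full setwise stabilizer in $G$ does not create any new orbits. Once that is in hand, the equality of barycenters and the standard face-intersection fact finish the job, so I do not anticipate a serious obstacle beyond correctly identifying the partition as the $H$-orbits.
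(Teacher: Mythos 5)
Your proposal is correct and follows essentially the same route as the paper: pass to the $H$-orbit partition, note that its stabilizer in $G$ has the same orbits as $H$, and use Lemma~\ref{lemma:barycenter} to produce a common relative interior point of the two faces. You merely spell out two details the paper leaves implicit (that the stabilizer has the same orbit structure, and that $P(H)=P(K)$ forces $H=K$ via the vertex sets), both of which are fine.
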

\begin{proof}
  Let $H \le G$ be a subgroup such that $P(H) \preceq P(G)$ is a
  face. Let $[n] = \bigsqcup I_k$ be the partition of $[n]$ into
  $H$-orbits, and set $\hat H := \stab(G;(I_k)_k)$. Then $H$ and $\hat
  H$ have the same orbit partition. Thus, $P(H)$ and $P(\hat H)$ are
  faces of $P(G)$ by assumption and by Lemma~\ref{lemma:stab-is-face},
  respectively. They share a relative interior point by
  Lemma~\ref{lemma:barycenter}. This implies $P(H)=P(\hat H)$.
\end{proof}

\bibliographystyle{amsplain}
\bibliography{alles,haasi}

\def\cprime{$'$}
\providecommand{\bysame}{\leavevmode\hbox to3em{\hrulefill}\thinspace}
\providecommand{\MR}{\relax\ifhmode\unskip\space\fi MR }
\providecommand{\MRhref}[2]{%
  \href{http://www.ams.org/mathscinet-getitem?mr=#1}{#2}
}
\providecommand{\href}[2]{#2}
\begin{thebibliography}{1}

\bibitem{BHNP07}
Barbara Baumeister, Christian Haase, Benjamin Nill, and Andreas Paffenholz,
  \emph{On permutation polytopes}, Adv.\ Math. \textbf{222} (2009), 431--452.

\bibitem{cyclicPermutationPolytopesFPSAC}
\bysame, \emph{Permutation polytopes of cyclic groups}, FPSAC 2012, DMTCS
  Proc., vol.~AR, 2012,
  \url{http://www.dmtcs.org/dmtcs-ojs/index.php/proceedings/issue/view/126},
  pp.~421--432.

\bibitem{dihedralPermutationPolytopes}
\bysame, \emph{Polytopes associated to dihedral groups}, Ars Math. Contemp.
  \textbf{7} (2014), no.~1, 30--38. \MR{3029450}

\end{thebibliography}

\end{document}